\newcommand{\ec}{ {\bf\color{red}{EY:} }}
\newcommand{\nb}{ {\bf\color{green}{NB:} }}
\newtheorem{definition}{Definition}
\newtheorem{problem}{Problem}
\newtheorem{theorem}{Theorem}
\title{\LARGE \bf
Koopman Operators for Generalized Persistence of Excitation Conditions for Nonlinear Systems 
}
\author{Nibodh Boddupalli, Aqib Hasnain, Sai Pushpak Nandanoori, and Enoch Yeung
\thanks{Please address correspondence to Nibodh Boddupalli at \tt\small nibodh@ucsb.edu.}
\thanks{Nibodh Boddupalli, Aqib Hasnain, and Enoch Yeung  are with the Department of Mechanical Engineering and the Center for Control,Dynamical Systems, and Computation at the University of California Santa Barbara, Santa Barbara, CA 93106, USA.  Sai Pushpak Nandanoori  is with the Energy and Environment Directorate at the Pacific Northwest National Laboratory.
}}
\begin{document}

\maketitle
\thispagestyle{empty}
\pagestyle{empty}

\begin{abstract}
It is hard to identify nonlinear biological models strictly from data, with results that are often sensitive to experimental conditions.  Automated experimental workflows and liquid handling enables unprecedented throughput, as well as the capacity to generate extremely large datasets.  We seek to develop generalized identifiability conditions for informing the design of automated experiments to discover predictive nonlinear biological models.   For linear systems, identifiability is characterized by persistence of excitation conditions.  For nonlinear systems, no such persistence of excitation conditions exist.   We use the input-Koopman operator method to model nonlinear systems and derive identifiability conditions for open-loop systems initialized from a single initial condition.  We show that nonlinear identifiability is intrinsically tied to the rank of a given dataset's power spectral density, transformed through the lifted Koopman observable space.  We illustrate these identifiability conditions with a simulated synthetic gene circuit model, the repressilator.  We illustrate how rank degeneracy in datasets results in overfitted nonlinear models of the repressilator, resulting in poor predictive accuracy.  Our findings provide novel experimental design criteria for discovery of globally predictive nonlinear models of biological phenomena. 


\end{abstract}

\section{INTRODUCTION}


Many physical systems exhibit phenomena with unknown governing dynamics.  These systems can be high dimensional and partially modeled or completely unstudied. Self-assembling complex systems, biological networks, Internet-of-Things infrastructure models, smart cities, and social networks are all examples of dynamically evolving systems that frequently are represented by data.   Identifying globally predictive models from data requires data collection strategies that capture all the modes of a system.  For example, in biological network modeling and discovery, designing an informative set of experimental conditions can produce global biological models that capture multiple modes of dynamics, including invariant subspaces and multiple equilibria.    Unfortunately, there are few identifiability metrics for quantifying the richness, or the {\it informativity}, of datasets of nonlinear systems.  Further, a dataset may only elicit linear modes of a nonlinear system, even though nonlinear modes may lay dormant. In these scenarios, the accuracy of a model discovery algorithm is often confounded with the informativity or richness of a dataset used to train the model.

Nonlinear systems lack generalized criteria for quantifying the information content of a given dataset. Even if a nonlinear model is deemed globally identifiable, this may be under the assumption of continuous noise-free sampling (perfect data) of all states.    In linear systems theory, the informativity of a dataset is characterized by its ability to persistently excite all the modes in the transfer function. These conditions, referred to as persistence of excitation conditions, prescribe rank requirements on either time-series or spectral signals.  First introduced in \cite{Astrom}, PE is ``not (yet) consistently defined" \cite{2017PE}.  There are two classical approaches to modeling persistence of excitation \cite{Astrom, 2017PE} .  Firstly, defining criteria or spectral properties of a control input such that it can elicit a response from the system, thereby uniquely specifying the frequency response model of the system (\cite{Ljung,Tangirala}.  Secondly, requiring that a control input that is  non-zero in all channels, at least once throughout the time course (\cite{Boyd,Sastry}).  Persistence of excitation conditions can also inform the appropriate construction of a universally exciting input signal, e.g. the construction of the Pseudo-Random Binary Sequence \cite{ONelles}, which persistently excites all linear systems.  It is thus desirable to establish a modeling framework to link classical results in linear system identification \cite{Ljung,Tangirala,Sastry} for the treatment of nonlinear systems.

Recently, an emerging set of operator-theoretic tools have gained traction, centered on discovering linear representations of nonlinear dynamical systems in a lifted space of coordinates \cite{Mezic2004,Mezic05,MezicAnnualReview}. Originally derived for Hamiltonian systems \cite{Koopman}, numerical \cite{2008APS} and theoretical \cite{Mezic2017spectrum} techniques for Koopman operator theory enable input \cite{Gramians,EnochControl,Kic,EnochMarkets,Korda2018data,korda2018linear} and spectral modeling of nonlinear systems \cite{Mezic05,kaiser2017data}, deep-learning based models of nonlinear phenomena \cite{deepDMD,LuschDMD,johnsonSIL}, and study of chaotic and uncountable spectra arising in complex nonlinear phenomena \cite{Korda2018data,williams2015data,Kic,Mezic2004,Mezic05}.  In this work, we use the Koopman operator method (Sections \ref{Secn_Koopman} and \ref{Koopman_control}) to lift nonlinear systems into a linear space and thereby derive generalized persistence of excitation conditions (Section \ref{secn_PE}), following a similar approach taken in linear systems theory.  We illustrate how generalized PE conditions inform the design of initial conditions for simulated experiments on the three node repressilator (Section \ref{secn_simulation}).

\section{KOOPMAN OPERATOR THEORY}\label{Secn_Koopman}
In 1931, Koopman showed the existence of a coordinate transformation $\psi \in \mathbb{R}^{n_L}$ and a corresponding unitary operator ${\mathcal K}:\mathbb{R}^{n_L}\rightarrow\mathbb{R}^{n_L}$ for any Hamiltonian (non-dissipative) dynamical systems. He showed that the operator ${\cal K}$ and observable $\psi$ could be used to represent the time evolution of the underlying Hamiltonian system as a linear time-invariant system. This idea has been developed and generalized to other classes of nonlinear systems in recent years (\cite{Mezic2004,Mezic05,Mezic2017spectrum}, etc.), in the effort to find global representations instead of local approximations.

A discrete-time nonlinear dynamical system with state $\bm x_t \in \mathbb{R}^n$ at time $t \in \mathbb{N}$ under $\bm f: \mathcal{M} \rightarrow \mathcal{M}$ can be represented as:
\begin{equation}\label{dyn_sys}
    \bm x_{t+1} = \bm f(\bm x_t)
\end{equation}
Then functions $\psi \triangleq \{\psi_i\}_{i=1}^{\infty} \in \mathcal{F}, i \in \mathbb{Z}_+$ are called "observables," and represent a mapping from the state-space into a lifted set of coordinates. For example, $\psi$ may be a scalar observable comprised of nonlinearities, weighted combinations of the state, such as $\psi_1(\bm x_t) = x_{1,t} \text{, }\psi_2(\bm x_t) = x_{3,t}^{0.5}, \psi_3(\bm x_t) = x_{2,t} x_{n,t}^2$. The true Koopman observables are often approximated in numerical Koopman learning algorithms like extended DMD (eDMD), deepDMD, and Hankel dynamic mode decomposition using functions which define a generic basis on a Hilbert function space, e.g. radial basis functions (RBFs), Hermite polynomials, or a combination of such basis functions in deepDMD \cite{deepDMD}.

For an analytical function $f(\bm x_t)$, we know from \cite{Gramians} that there exists a countably infinite or finite dimensional Koopman operator $\mathcal{K}$ that acts linearly on the observable $\psi : \mathcal{M} \rightarrow \mathbb{R}$ under function composition, satisfying the Koopman equation 
\begin{equation}\label{lifted_sys}
    \mathcal{K} \psi(\bm x_t) = \psi \circ \bm f(\bm x) = \psi \big(\bm f(\bm x_t)\big) = \psi(\bm x_{t+1}).
\end{equation}
This equation states that the action of the Koopman operator on an observable function is equivalent to the action of the observable function on the vector field of the state. 

From the above transient behaviour, one can see that this itself is a dynamical system in the space of observables whose time-evolution is governed by an infinite-dimensional operator that preserves the essence of (\ref{dyn_sys}). The transformation from (\ref{dyn_sys}) to (\ref{lifted_sys}) is frequently referred to as "lifting" since the observable $\psi(\bm x)\in\mathbb{R}^{n_L}$ usually has dimension $n_L \geq n.$  This is not always the case, but \cite{johnsonSIL} shows that the observables define an expansion of the nonlinearities in the governing equations.

\subsection{Finite Dimensional Approximations}

\section{Koopman Input Output Theory}\label{Koopman_control}
Our study of the dynamical system (\ref{dyn_sys}), ultimately will require treating the initial condition as an input signal (via a Kronecker delta function) to the system.  For clarity, we introduce the notion of an input-Koopman operator here, as well as the notion of the Koopman transfer function of a nonlinear system.  Given a discrete-time nonlinear system with analytic vector field $\bm f(\bm x,\bm u)$ and control input $\bm u_t \in \mathbb{R}^m$, we write the dynamics of the system as:
\begin{equation*}
\begin{split}
    \bm x_{t+1} &= \bm f(\bm x_t, \bm u_t)\\
    \bm y_{t} &=\bm h(\bm x_t)
\end{split}
\end{equation*}
\cite{Kic} showed that an observable $\psi: \mathcal{M}\times\mathbb{R}^m \rightarrow \mathbb{R}^{n_L}$ can lift the above system to $\mathcal{F}$ such that:
\begin{equation*}
        \mathcal{K}\psi(\bm x_t,\bm u_t) = \psi(\bm f(\bm x_t,\bm u_t), \bm u_{t+1}) = \psi(\bm x_{t+1},\bm u_{t+1})
\end{equation*}

For an exogenous memoryless input, \cite{EnochControl} demonstrated that the above can be modified by splitting $\psi(\bm x,\bm u)$ into components $\psi_x(\bm x)$ and $\psi_u(\bm x,\bm u)$, where $\psi_x(\bm x)$ is a stacked vector valued observable of all scalar valued observable functions from $\psi(\bm x,\bm u)$ that do not depend on $\bm u$, and $\psi_u(\bm x,\bm u)$ is a stacked vector valued observable of all remaining terms. This results in the decomposed representation
\begin{equation}\label{control_sys}
\begin{aligned}
    \bm \psi_x(\bm x_{t+1})& = \bm K_x \bm \psi_x(\bm x_t) + \bm K_u \bm \tilde{\bm \psi}_u(\bm z_t)\\
    \bm y &= W_h \bm \psi(\bm x_t,\bm u_t)
    \end{aligned}
\end{equation}
where $\tilde{\psi}_u({\bm z({\bm u_t})}) \equiv \psi_u(\bm x,\bm u)$ and $\bm z_t$ is a stacked vector of all multivariate terms of $\bm u_t$ and $\bm x_t$.  This is a representation of the nonlinear system dynamics that is linear in the {\it lifted} state observable $\psi_x(\bm x)$ and the {\it lifted} input-state mixture observable  $\psi_u(\bm x,\bm u) = \tilde{\psi_u} ({\bm z_t})$.  We thus can define the {\it Koopman discrete time transfer} function as 
\begin{equation}
    G_K(\text{z}) = {\bf W_h}(\text{z}I-{\bf K}_x)^{-1} {\bf K_u }
\end{equation}
where we have assumed that $\bm y_t = \psi_x({\bm x}_t).$  The zeros and the poles of the transfer function are defined in the usual manner, but with respect to the transformations on the state $\psi_x(\bm x_t)$  and input $\tilde{\psi}_u(\bm z(\bm u_t)))$. A continuous time analogue of the discrete-time Koopman operator is readily derived using the Koopman generator, rather than the Koopman operator.  While the discrete-time formulation is chosen in this work, a similar treatment of the sequel can be used to develop persistence of excitation conditions for continuous-time nonlinear dynamical systems.

\section{Problem Statement}\label{Secn_prob_state}

In classical system identification theory, the design of an input or initial condition to guarantee identifiability is framed in terms of PE. One of the merits of a Koopman operator representation is that it provides a representation of a nonlinear system in linear coordinates.  This allows us to rigorously formulate the question of how to design an initial condition that guarantees identifiability of the model. Specifically, we consider two problems:

\begin{problem}[Identifiability of a Nonlinear System with Fixed Initial Conditions]
Given a discrete time-invariant autonomous nonlinear system of the form
\begin{equation}
\begin{aligned}
\bm x_{t+1} &= \bm f(\bm x_t), \,\,\, \,  \bm x_{t_0} \in X_0   \\
\bm y_t & = \bm h(\bm x_t) \equiv \bm x_t
\end{aligned}
\end{equation}
determine if the model $\bm f(\bm x)$ can be identified from the continuously sampled data stream $\bm x(t)$ and the set of initial conditions $X_0$.
\end{problem}
This problem is difficult to solve, as it couples the problem of nonlinear function regression of $\bm f(\bm x)$ and $\bm h(\bm x)$ with the requirement of characterizing the richness of a set of fixed initial conditions.  This problem is equivalent to the nonlinear state-space realization problem, given a single initial condition or set of initial conditions $X_0$.  

The other variant of this problem is where the initial conditions are design parameters in an experiment and can be set by the user. This scenario is especially common in synthetic biology where experiments are conducted with varying initial conditions of concentrations, pH, etc. and hence is of interest from a design of experiment (DoE) standpoint.
\begin{problem}[Identifiability of a Nonlinear System with Designed Initial Conditions]
Given a continuous time-invariant autonomous nonlinear system of the form and the design choice of initial conditions $X_0$
\begin{equation}
\begin{aligned}
\bm x_{t+1} &= \bm f(\bm x_t), \,\,\, \,  \bm x_{t_0} \in X_0   \\
\bm y_t & = \bm h(\bm x_t) \equiv \bm x_t
\end{aligned}
\end{equation}
Find $X_0$ that guarantees the model $\bm f(\bm x)$ can be identified from the continuously sampled data stream $\bm x(t)$. 
\end{problem}
This variant of the problem presumably has more degrees of freedom, namely $\dim(\bm x_{t_0})|X_0)| = n|X_0|$ to be precise. However, the challenge is to relate identifiability of the model to the initial condition, which for an unknown $\bm f(\bm x)$ is inherently difficult.   We now use the input-Koopman framework derived in the previous sections to reformulate the problem with a linear Koopman representation.  The recasting of the problem will permit extension of classical identifiability notions such as the design of PE or SR input signals or initial conditions.

\section{Persistence of Excitation Conditions for Nonlinear Systems with Koopman Operators}\label{secn_PE}

Our contribution in this paper is to formulate the problem of identifiability with fixed and designed initial conditions using the method of Koopman.  Once we have formulated the problem in the Koopman operator theoretic framework, we derive computational certificates for PE for fixed or designed initial conditions. This leads to an algorithm for selection of the initial conditions of a dynamical system given $\bm x_{t_0}.$  In both scenarios, we treat the system's initial condition as an input. 
Given a nonlinear discrete-time dynamic system of the form 
\begin{equation}
\begin{aligned}
    \bm x_{t+1} &= \bm f(\bm x_t) \\
    \bm y_t & = \bm h(\bm x_t) \equiv x_t
    \end{aligned}
\end{equation}
where $\bm x \in \mathbb{R}^n$ and $t \in \mathbb{Z},$ the Koopman equation for the corresponding system defines the action of an operator $\bm K$ on a vector valued observable $\bm \psi \in \mathbb{R}^{n_L}$ acting on the state $\bm x_t \in \mathbb{R}^n$, namely 
\[ 
\bm \psi(\bm f(\bm x)) = \bm K\bm \psi(\bm x_t)
\]
Then, (\ref{control_sys}) can be represented with initial state $\bm \psi(\bm x_{t_0})$ as an input:
\begin{equation*}
\begin{split}
    \bm \psi(\bm x_{t+1}) &= \bm K\bm \psi(\bm x_t) + (\bm \psi(\bm x_{t_0}) - \bm K\bm \psi(\bm x_t))\delta_{t,t_0-1}
\end{split}
\end{equation*}

We thus model the initial condition as a Kronecker delta input to the dynamical system.  Accordingly, we will abuse notation slightly and define the input of the system as $(\bm \psi(\bm x_{t_0}) - \bm K \bm \psi(x_t))\delta_{t,t_0-1} \equiv \bm \varphi(\bm u_t)$, which yields the classic input-state Koopman
representation for a nonlinear system 
\begin{equation}\label{Lin_sys}
\begin{split}
    \bm \psi(\bm x_{t+1}) &= \bm K\bm \psi(\bm x_t) + \bm \varphi(\bm u_t).
\end{split}
\end{equation}
 Notice that the state vector $\psi_x(\bm x)\in\mathbb{R}^{n_L}$  is a vector observable function on which the Koopman operator acts as a linear operator to update the state. In essence, by defining an appropriate 'lifting' or 'observable' function, we can treat the problem as a classical linear identifiability problem, with additional caveats imposed by the presence of the nonlinearities in $\psi_x(\bm x)$ and $\psi_u(\bm x,\bm u)$. As there are multiple definitions in the literature for PE \cite{Ljung,Sastry}, we chose a definition that enables relating identifiability to the initial condition of a transfer function.  We pose an extension of the definition from 
\cite{Ljung} which is that of SR from \cite{Sastry}, but in the Koopman framework.

\begin{definition}[Persistence of Excitation] A quasi-stationary discrete time observable $\bm \varphi(\bm u_t) \in \mathbb{R}^{n_L}$ is said to be persistently exciting of order $N \in \mathbb{Z}_+$ if the covariance matrix $\overline{\bm R}_\varphi(N)$ is positive definite:
\begin{equation}\label{covariance}
    \overline{\bm R}_\varphi(N) := \begin{bmatrix}
   \bm R_\varphi(0) & \cdots &\bm R_\varphi(N-1)\\
    \vdots & \ddots & \vdots\\
   \bm R_\varphi(-(N-1)) & \cdots &\bm R_\varphi(0)\\
    \end{bmatrix}
\end{equation}
where $\bm R_\varphi(k), k \in \mathbb{Z}$ is the auto-covariance of $\bm \varphi(\bm u_t)$ formulated as:
\begin{equation}\label{auto-covariance}
   \bm R_\varphi(k) \coloneqq \mathbb{E} \big[ \bm \varphi(\bm u_t) \bm \varphi(\bm u_{t+k})^T \big],
\end{equation}
\end{definition}
\noindent and ${ \mathbb{E}}$ denotes the expectation operator.  The above definition has been referred to as {\it sufficient richness} in \cite{Sastry} as well. Positive semi-definiteness of $\overline{\bm R}_\varphi(N)$ can easily be proved. By substituting the $R.H.S.$ of (\ref{auto-covariance}) in (\ref{covariance}) we obtain:
\begin{multline*}
    \overline{\bm R}_\varphi(N)=
    { \mathbb{E}}\bigg\{ \begin{bmatrix}
    \bm \varphi(\bm u_{t+1}) \\ \vdots \\ \bm \varphi(\bm u_{t+N}) \end{bmatrix}
    \begin{bmatrix}
    \bm \varphi(\bm u_{t+1})^T,\cdots,\bm \varphi(\bm u_{t+N})^T \end{bmatrix}\bigg\}
\end{multline*}
Defining $\bm v \triangleq \big[\bm \varphi(\bm u_{t+1})^T,\cdots,\bm \varphi(\bm u_{t+N})^T\big]^T$, the above becomes:
\begin{equation}
    \begin{split}
        \overline{\bm R}_\varphi(N) &= { \mathbb{E}} \big[ \bm v \bm v^T \big].\\
    \end{split}
    \end{equation}
Let $\bm q = [\bm q_1^T,\dots,\bm q_k^T,\dots, \bm q_N^T]^T, \bm q_k \in \mathbb{R}^{n_L}  \setminus \{0\},$ we have
\begin{equation}\label{SR}
    \begin{split}
        \bm q^T \overline{\bm R}_\varphi(N) \bm q &= { \mathbb{E}} \big[ \|\bm q^T \bm v\|_2^2\big] \\ & = \sum_{l,m = 1}^N \bm q_l^T\bm R_\varphi(m-l) \bm q_m \\ &\geq 0\\
    \end{split}
\end{equation}

Hence, $\overline{\bm R}_\varphi(N)$ is positive semi-definite. From the Herglotz Theorem, any function $R_g(k)$ defined on integers is positive semi-definite if and only if it has a Bochner representation (can be represented as the inverse Fourier transform) of a unique Spectral Measure $S_g(\nu)$ on a circle:
\begin{align*}
    R_g(k) &= \int_{-\pi}^{\pi}e^{ik\nu} S_g(d\nu)
           = \int_{-\pi}^{\pi}e^{ik\nu} S_g(\nu) d\nu
\end{align*}

If $R_g(k)$ is a scalar, $S_g(\nu)$ is its power spectrum as pointed out in \cite{Sastry} and \cite{Boyd}. Alternate definitions have been mentioned in literature but the following definition and result elucidates how PE of an initial condition for a dynamical system can be related to the Koopman transfer function. 
\begin{definition}
Given system (\ref{dyn_sys}), we say an initial condition $x_0$, treated as a  Kronecker delta signal $\delta(\bm x_0)$ is persistently exciting of Koopman-order $n_L$ if it is persistently exciting for a state-inclusive Koopman operator ${\cal K} \in \mathbb{R}^{n_L}$ of order $n_L$ satisfying 
\begin{multline}\label{eq:KoopmanStateInclusive}
        \psi(\bm x_{t+1}) =\begin{bmatrix}
 \bm x_{t+1} \\ \varphi(\bm x_{t+1}) \end{bmatrix}= \begin{bmatrix} K_{xx} & K_{x\varphi} \\K_{\varphi x} &K_{\varphi\varphi}  
    \end{bmatrix}\begin{bmatrix}
 \bm x_{t} \\ \varphi(\bm x_{t}) \end{bmatrix} +\\
\delta_{t,-1} (\bm \psi(\bm x_0) - K\bm \psi(x_t)).
\end{multline}
\end{definition}
\begin{theorem}\label{prop}
The initial condition $x_0$ is persistently exciting for the nonlinear dynamical system (\ref{dyn_sys}) if and only if the Fourier transform of the auto-covariance matrix $\bm R_\varphi(k)$
\begin{equation*}\label{Wiener2}
    \bm S_\varphi(\omega) = \sum_{k=-\infty}^{\infty}\bm R_\varphi(k)e^{-ik\omega}
\end{equation*}
has $n_L$ distinct frequencies $\omega_1,...,\omega_{n_L}$ where $\bm S_\varphi(\omega)$ does not vanish, i.e. $n_L$ positive spectral lines.
\end{theorem}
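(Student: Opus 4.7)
The plan is to reduce positive-definiteness of $\overline{\bm R}_\varphi(n_L)$ to a polynomial/Vandermonde non-singularity condition on the spectral support of $\bm\varphi(\bm u_t)$. Continuing from (\ref{SR}), I substitute the Bochner/Herglotz representation $\bm R_\varphi(k)=\int_{-\pi}^{\pi}e^{ik\omega}\bm S_\varphi(\omega)\,d\omega$ into $\bm q^T\overline{\bm R}_\varphi(N)\bm q$, swap sum and integral, and collect the geometric series to obtain
\begin{equation*}
\bm q^T \overline{\bm R}_\varphi(N)\bm q \;=\; \int_{-\pi}^{\pi} \bm Q(\omega)^{\ast}\, \bm S_\varphi(\omega)\, \bm Q(\omega)\, d\omega,
\end{equation*}
where $\bm Q(\omega):=\sum_{m=1}^{N}e^{im\omega}\bm q_m\in\mathbb{C}^{n_L}$ is the vector-valued trigonometric polynomial built from the blocks of $\bm q$. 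Because $\bm S_\varphi(\omega)$ is Hermitian positive semi-definite, the integrand is pointwise non-negative, so the form vanishes iff $\bm Q(\omega)\in\ker\bm S_\varphi(\omega)$ throughout the support of the spectral measure.

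Fix $N=n_L$. For sufficiency, assume $\bm S_\varphi$ carries $n_L$ distinct spectral lines at $\omega_1,\dots,\omega_{n_L}$ with $\bm S_\varphi(\omega_k)\succ 0$. Then the form vanishes only when $\bm Q(\omega_k)=0$ for each $k$. Setting $z=e^{i\omega}$ and examining each component $Q_i(\omega)=\sum_{m=1}^{n_L}z^{m}q_{m,i}$, we see $Q_i$ is a polynomial of effective degree $n_L-1$ in $z$ (after factoring the trivial root at $z=0$); the existence of $n_L$ distinct non-zero roots $z_1,\dots,z_{n_L}$ forces $Q_i\equiv 0$, whence $\bm q=0$ and $\overline{\bm R}_\varphi(n_L)\succ 0$. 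Equivalently, the associated block Vandermonde matrix with blocks $z_k^{m}\bm I_{n_L}$ is non-singular, recovering the classical spectral-line criterion in lifted coordinates. For necessity I argue the contrapositive: if only $r<n_L$ frequencies carry positive spectral mass, then the conditions $\bm Q(\omega_k)=0$ form an under-determined linear system in the $n_L^2$ real entries of $\bm q$, producing a non-trivial null vector and contradicting $\overline{\bm R}_\varphi(n_L)\succ 0$. Translating back through the state-inclusive lifted system (\ref{eq:KoopmanStateInclusive}) then delivers the conclusion for the original initial condition $\bm x_0$.

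The main obstacle is handling the matrix-valued spectral density when the mass at a spectral line is rank-deficient. In the scalar Ljung setting each spectral line contributes exactly one scalar constraint and the univariate polynomial degree count closes the argument; in the Koopman setting $\operatorname{rank}\bm S_\varphi(\omega_k)$ can be strictly less than $n_L$, so each frequency contributes only $\operatorname{rank}\bm S_\varphi(\omega_k)$ independent constraints on $\bm Q(\omega_k)$, and the clean phrasing ``$n_L$ spectral lines'' must be interpreted as ``each spectral line is positive definite.'' The cleanest way to defend the theorem as stated is to adopt this convention; otherwise the equivalence should be sharpened into a block-Vandermonde full-rank argument accumulating $\sum_k\operatorname{rank}\bm S_\varphi(\omega_k)$ independent constraints, which is essentially the Koopman-lifted analogue of the classical Ljung/Sastry spectral-line criterion.
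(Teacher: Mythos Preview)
Your approach coincides with the paper's: both substitute the Bochner representation into (\ref{SR}), interchange the finite sum with the integral, and rewrite the quadratic form as $\int_{-\pi}^{\pi}\bm Q^{*}(e^{i\omega})\bm S_\varphi(\omega)\bm Q(e^{i\omega})\,d\omega$ with the same trigonometric filter $\bm Q$. The paper stops at that identity and asserts the equivalence with ``$n_L$ non-vanishing frequencies'' without further justification; your Vandermonde/degree-count argument for sufficiency, the contrapositive for necessity, and your caveat about rank-deficient spectral lines supply precisely the details the paper leaves implicit, so your write-up is in fact a more complete version of the same proof.
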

\begin{proof}
Let the dimension of the lifted space and corresponding Koopman vector valued observable $\psi(\bm x)$  be denoted as  $n_L$.  We write the initial condition as an input to the nonlinear dynamical system  (\ref{dyn_sys}), of the form $\delta_t(\bm x_0)$.  We suppose that $\psi(\bm x)$ is state inclusive \cite{johnsonSIL}.  This implies that:
\begin{multline*}
x_{t+1} = f(\bm x_t) = K_{xx} x_t + K_{x\varphi}\varphi(x_t) \\
 \hspace{3mm}  + \delta_{t,-1} (\bm x_0 - K_{xx}\bm x_t - K_{x\varphi}\varphi(x_t))
\end{multline*}
i.e. the PE of an input signal $\psi(\bm x_0)$ for the system (\ref{eq:KoopmanStateInclusive}) implies the PE of the system (\ref{dyn_sys}), since the flow and vector field of system  (\ref{eq:KoopmanStateInclusive}) is a projection of the flow and vector field of the system (\ref{dyn_sys}), respectively. It suffices to demonstrate the equivalence of the PE of $\psi(\bm x_0)$, up to order $n_L$, to the linear independence of spectral lines for the power spectral density $\bm S_\varphi(\omega).$ 

We know that the spectral measure $\bm  S_\varphi(\omega)$ of $\bm R_\varphi(k)$ would be a positive semi-definite, symmetric matrix of bounded measures, symmetric over all $\omega$ since the observable elements $\varphi(\bm x)$ are all real valued. We have by the definition of the spectral power measure that
\begin{equation}\label{Bochner}
   \bm R_\varphi(k) = \int_{-\pi}^{\pi}e^{ik\omega} \bm S_\varphi(\omega)d\omega.
\end{equation}

Since the spectral measure is bounded almost everywhere, the monotone convergence theorem allows us to write from (\ref{SR})
\begin{equation*}
    \begin{split}
        \sum_{l,m = 1}^N \bm q_l^T \big[ \int_{-\pi}^{\pi}e^{i(m-l)\omega} \bm S_\varphi(\omega)d\omega \big] \bm q_m &\geq 0.\\
        \end{split}
        \end{equation*}
After swapping the finite sum with limiting sums
        \begin{equation*}
            \begin{split}
        \int_{-\pi}^{\pi} \bigg( \sum_{l,m = 1}^N \bm q_l^T e^{-il\omega} \bm S_\varphi(\omega) e^{im\omega} \bm q_m \bigg)d\omega &\geq 0, \\
        \end{split}
        \end{equation*}
and distributing sums across the integrand, we obtain
        \begin{equation*}
            \begin{split}
        \int_{-\pi}^{\pi} \bigg( \big( \sum_{l = 1}^N e^{-il\omega} \bm q_l^T \big) \bm S_\varphi(\omega) \big(\sum_{m = 1}^N e^{im\omega} \bm q_m \big) \bigg)d\omega &\geq0.
    \end{split}
\end{equation*}
Defining filters $\bm Q(e^{i\omega}) \triangleq \sum_{k = 1}^N e^{ik\omega} \bm q_k$, the above becomes:
\begin{equation}\label{possitive_defn}
    \int_{-\pi}^{\pi} \bigg( \bm Q^*(e^{i\omega}) \bm S_\varphi(\omega) \bm Q(e^{i\omega}) \bigg)d\omega \geq 0.
\end{equation}
Thus, positive definiteness of $\bm S_\varphi(\omega)$ only holds if and only if $\bm R_\varphi(k)$ is positive definite. But positive definiteness of $\bm S_\varphi(\omega)$ holds if and only if there are at least $n_L$ frequencies in which the integrand does not vanish, namely $n_L$ frequencies where \[
\sum_{k=1}^{N} e^{ik\omega}\bm q_k
\] is in the orthogonal complement of the null space of $\bm S_\varphi(\omega).$  This proves the result. 
\end{proof}
From our simulation studies, we found that most initial conditions are PE up to order $n_L$, for their respective Koopman operator and dynamical system.   If we consider the design problem of selecting an initial condition $\bm x_0$ such that $\psi(\bm x_0)$ is PE, or a set of $\Psi(\bm x_0)$ are PE of Koopman-order $n_L$, we can express the problem in terms of the positive definiteness of $\bm R_\varphi(N),$  adjusting the signal $\psi(\bm x_0)$ or more generally the timing of the input to ensure the positive definiteness of the auto-covariance matrix.  Alternatively, when working with a collection of initial condition signals $\psi(\bm x_0) \in \Psi(\bm x_0)$ it is straightforward to visualize the power spectrum using the transformed signal $\delta_t(\psi(\bm x_0)\in \Psi(\bm x_0)).$  Initial conditions can be selected or drawn randomly from the phase space until a suitable collection of initial conditions and $n_L$ spectral lines are identified. 


\begin{figure}
    \centering
    \begin{subfigure}{\columnwidth} \centering
    \includegraphics[width=.95\columnwidth]{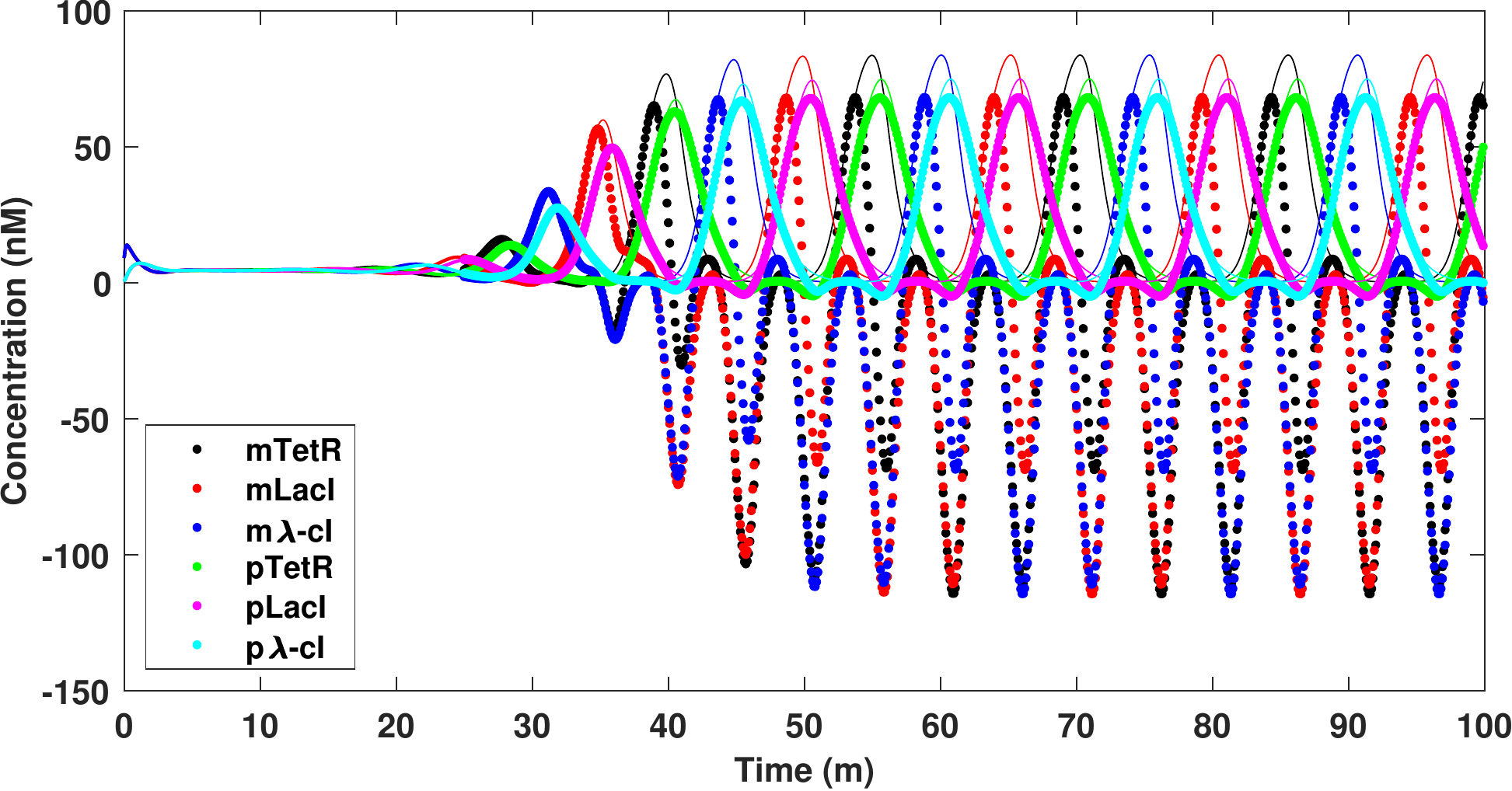}
    \caption{} \label{subfig_1a} \end{subfigure}
    \begin{subfigure}{0.49\columnwidth} \centering
    \includegraphics[width=.95\columnwidth]{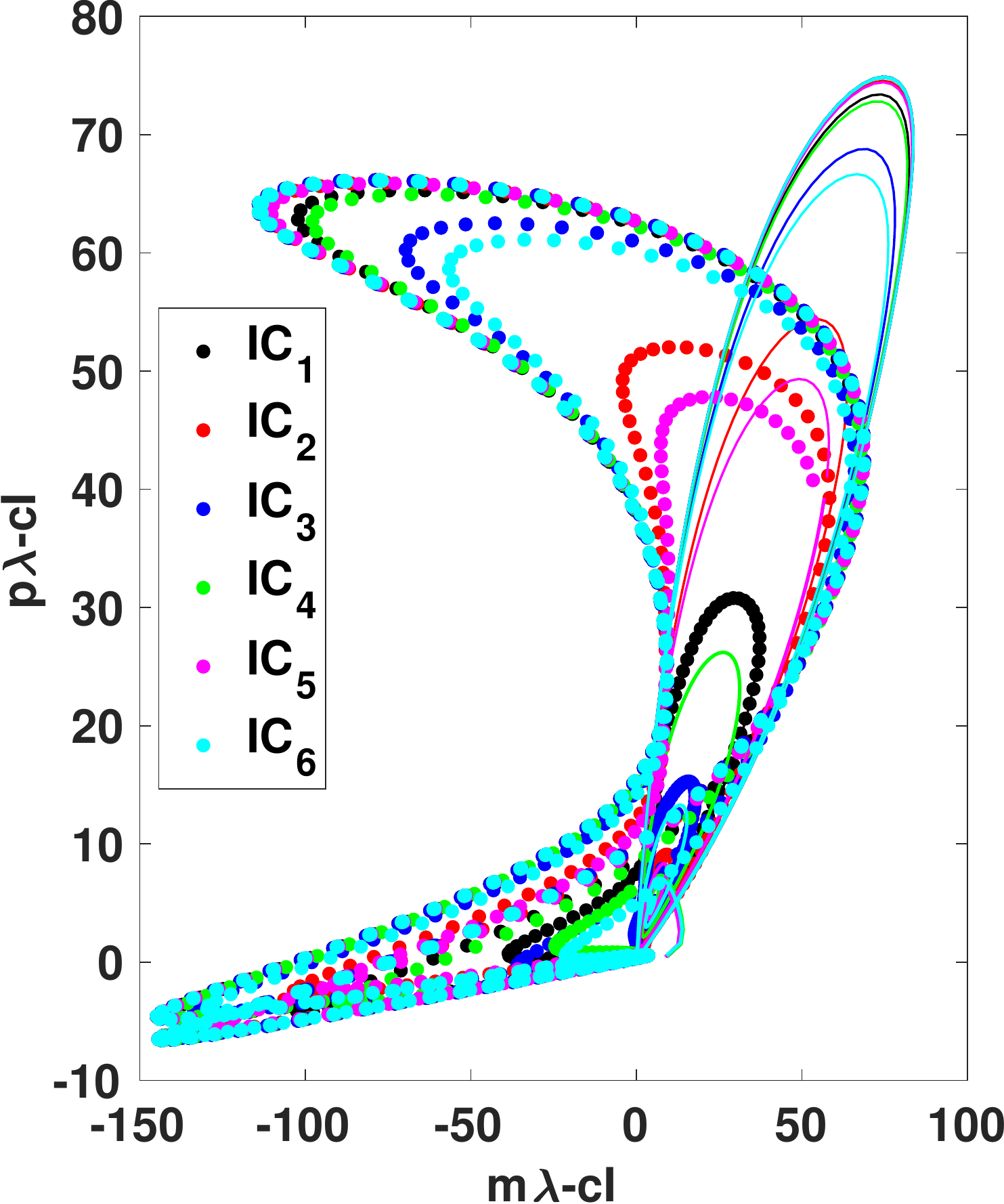}
    \caption{} \label{subfig_1b} \end{subfigure}
    \begin{subfigure}{0.49\columnwidth} \centering
    \includegraphics[width=.95\columnwidth]{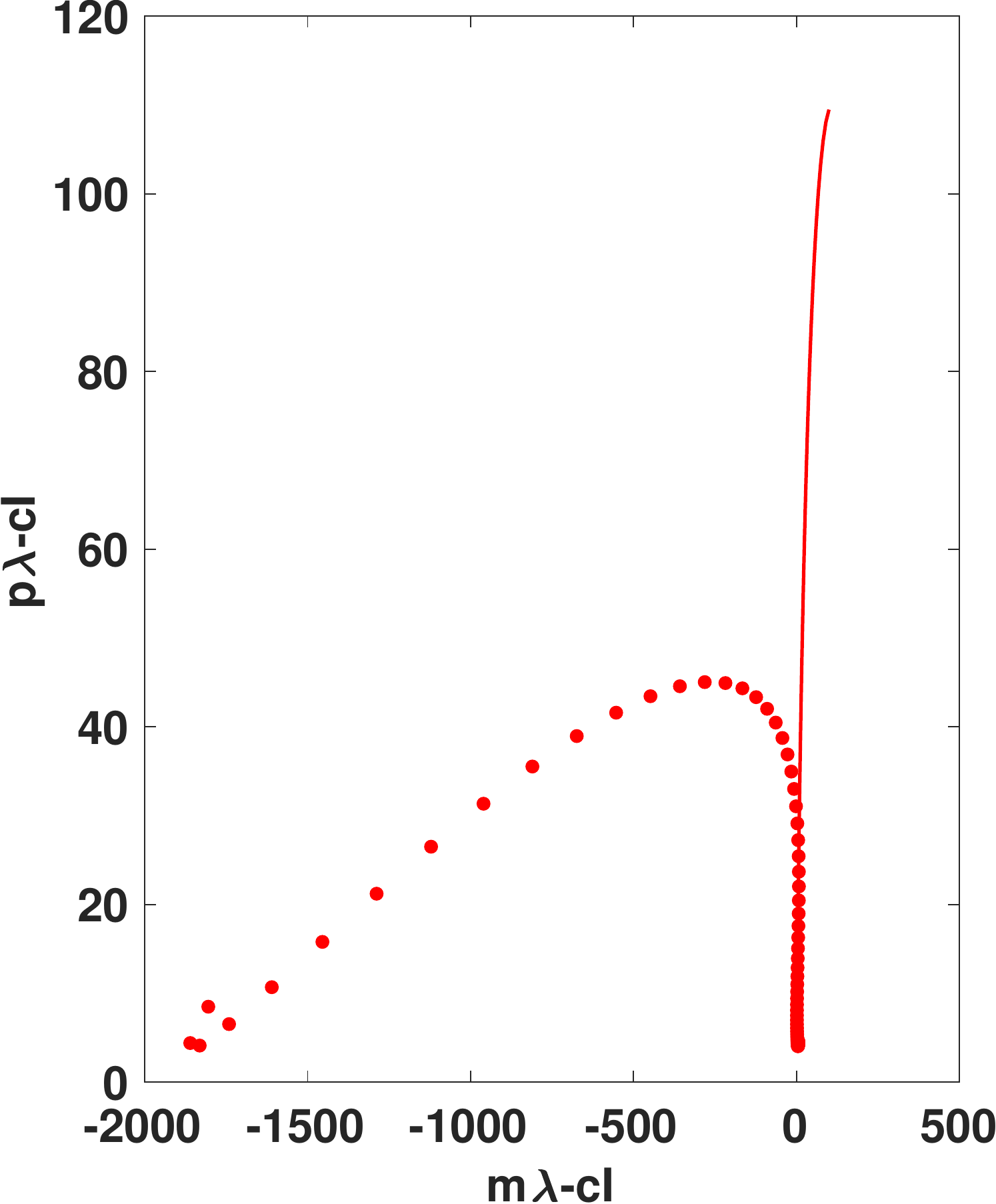}
    \caption{} \label{subfig_1c} \end{subfigure}
    \begin{subfigure}{0.49\columnwidth} \centering
    \includegraphics[width=.95\columnwidth]{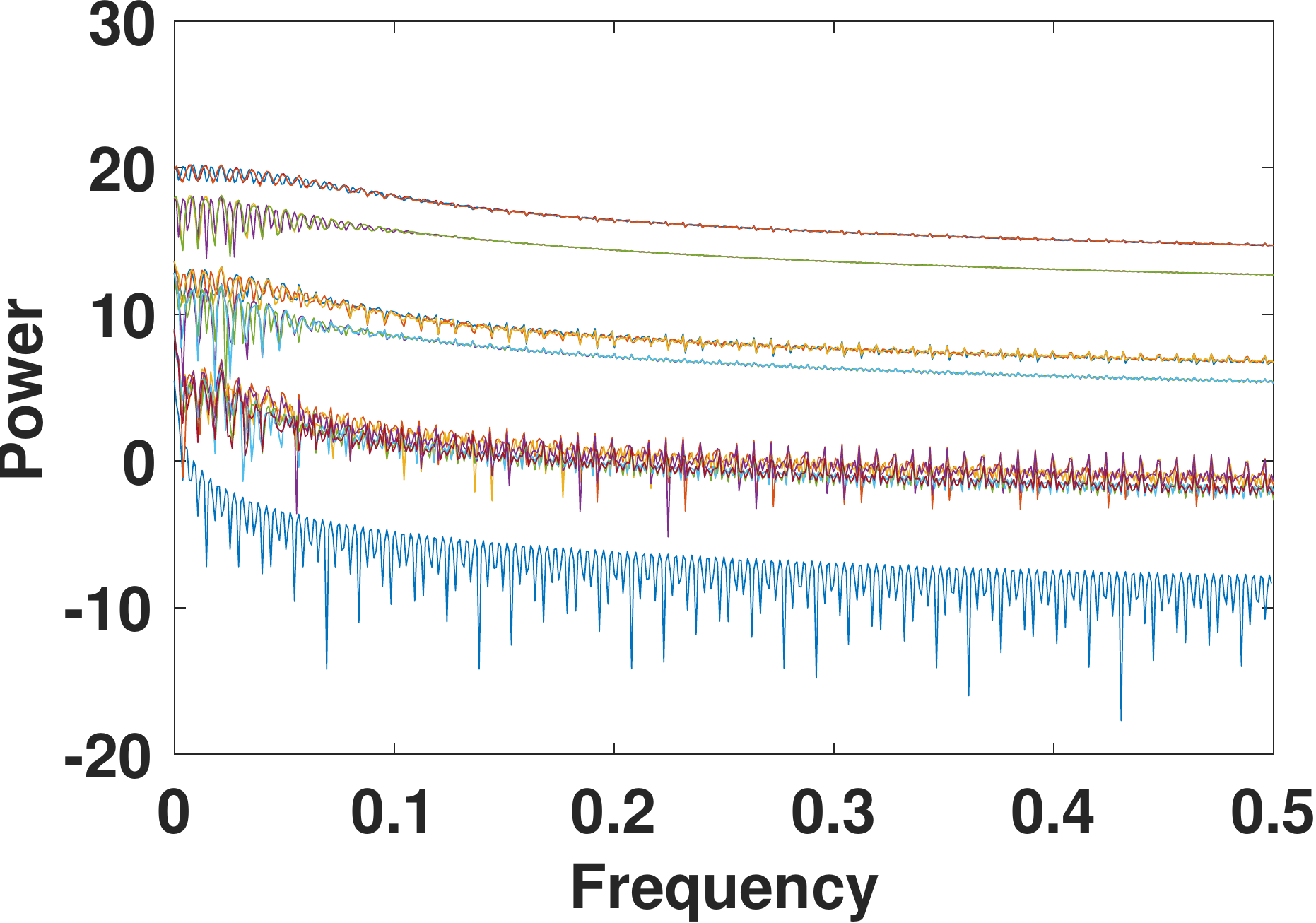}
    \caption{} \label{subfig_1e} \end{subfigure}
    \begin{subfigure}{0.49\columnwidth} \centering
    \includegraphics[width=.95\columnwidth]{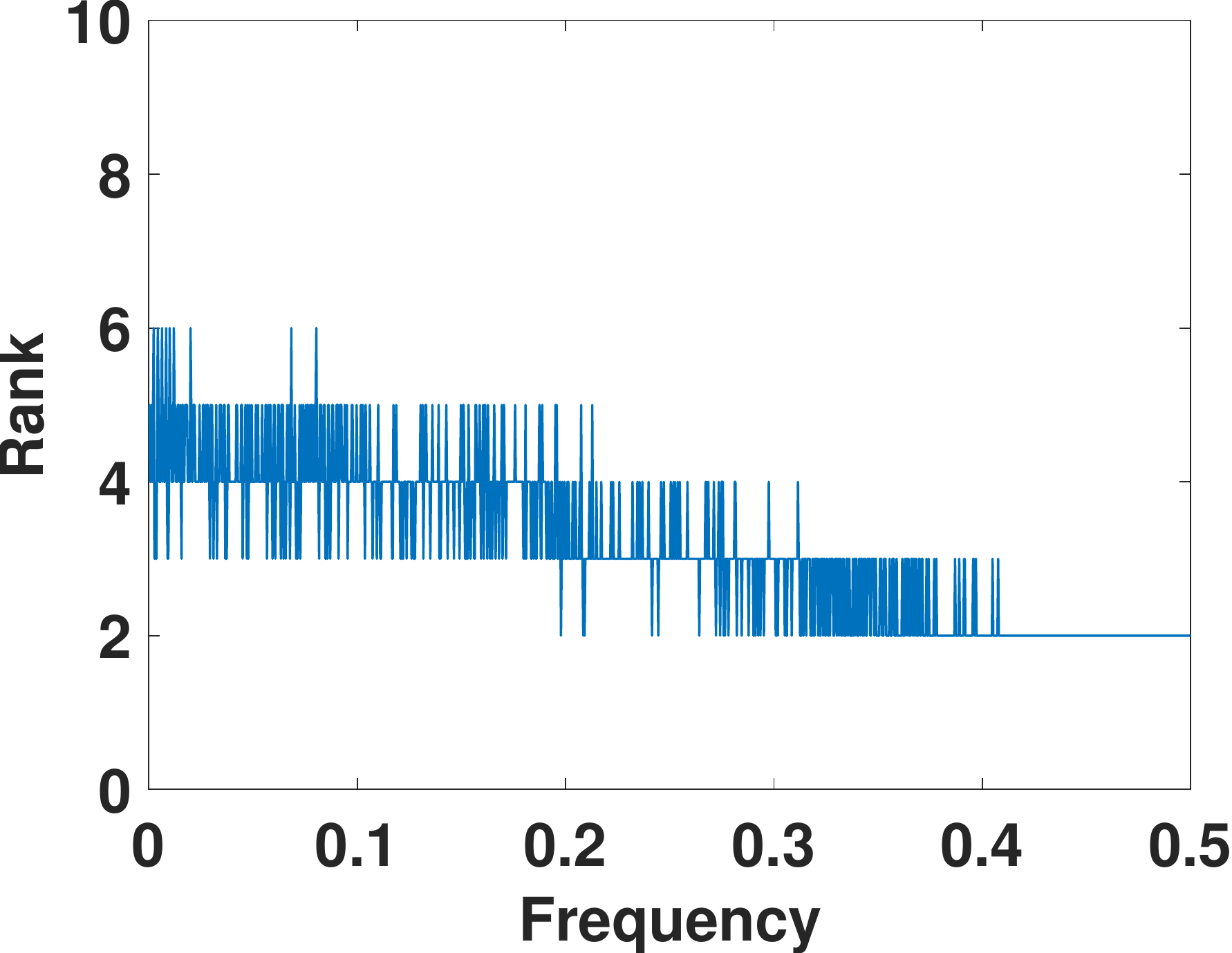}
    \caption{} \label{subfig_1f} \end{subfigure}
    \caption{The repressilator trained from inside a unit ball centered at $0$ and predicted from another basin of attraction using the Koopman operator. (a) States oscillating with time as simulated (solid lines). Koopman operator trained up to $t = 25$ for prediction (dotted lines) then onward. (b) Simulated (solid) trajectories growing into limit cycles and prediction (dotted) of the limit cycles. (c) Simulated (solid) and predicted (dotted) trajectory form another basin of attraction. (d) Periodogram of trajectories. (e) Rank of Power Spectrum of trajectories up-to $t = 25$ used to train the Koopman Operator}
    \label{fig:RepressilatorPlot_Inside}
\end{figure}                                  
\section{Persistence of Excitation of Different Initial Conditions for a Repressilator Genetic Circuit Model}\label{secn_simulation}

\subsection{The Repressilator}

The repressilator is a classical genetic circuit used in synthetic biology to implement circadian rhythms or synthetic oscillations.  The architecture is that of a 3 node Goodwin oscillator, with three genes that produce proteins or mRNA that serve to repress the downstream or target gene's function.  Each gene represses its downstream target, with the final gene repressing the original gene to form a cycle of negative feedback. When the gain of the individual genes are balanced with respect to each other \cite{Sontag}, the genetic circuit admits a limit cycle in the phase portrait and a single basin of attraction surrounding the origin.
\begin{figure}
    \centering
    \begin{subfigure}{\columnwidth} \centering
    \includegraphics[width=.95\columnwidth]{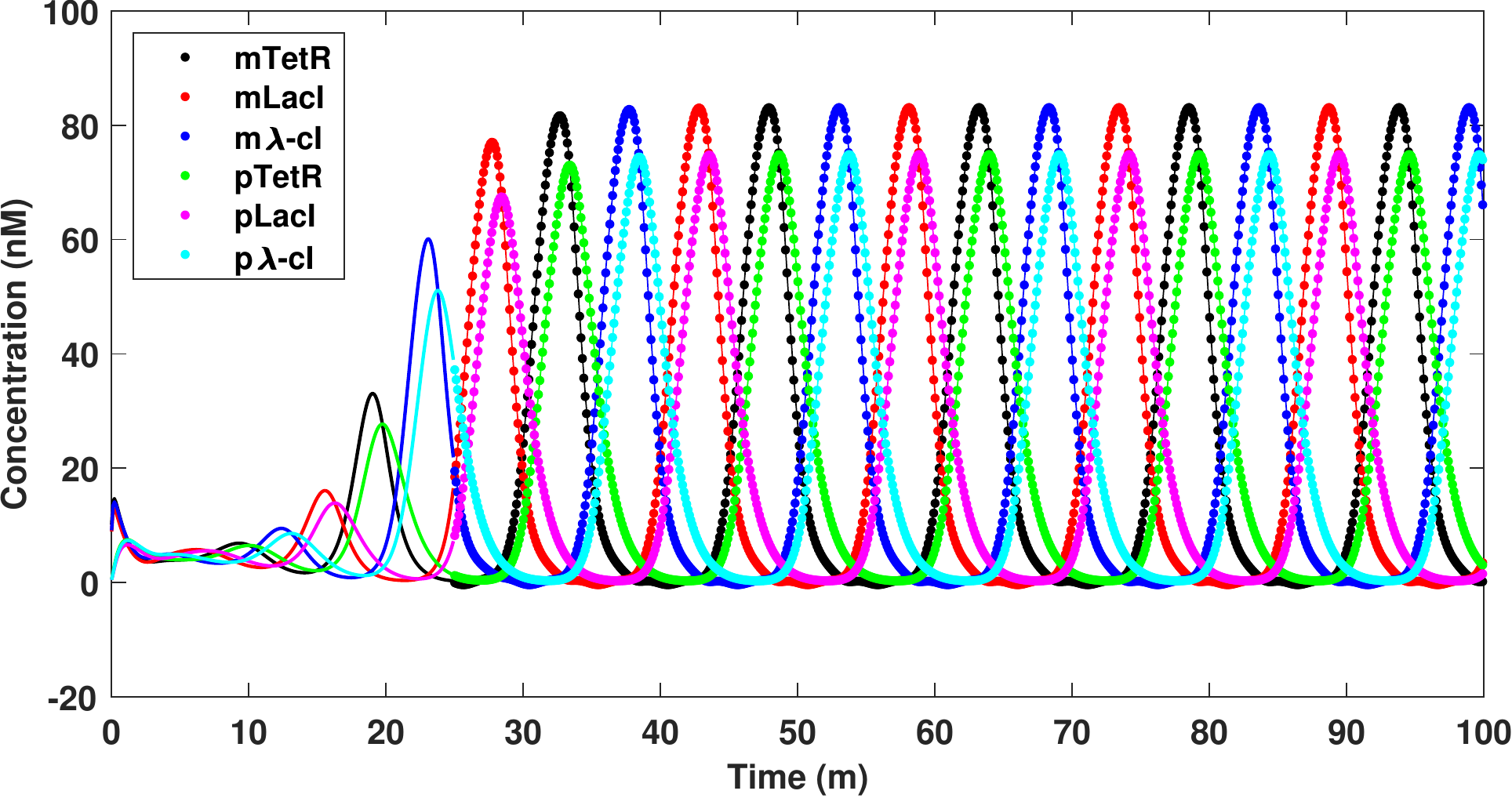}
    \caption{} \label{subfig_2a} \end{subfigure}
    \begin{subfigure}{0.49\columnwidth} \centering
    \includegraphics[width=.95\columnwidth]{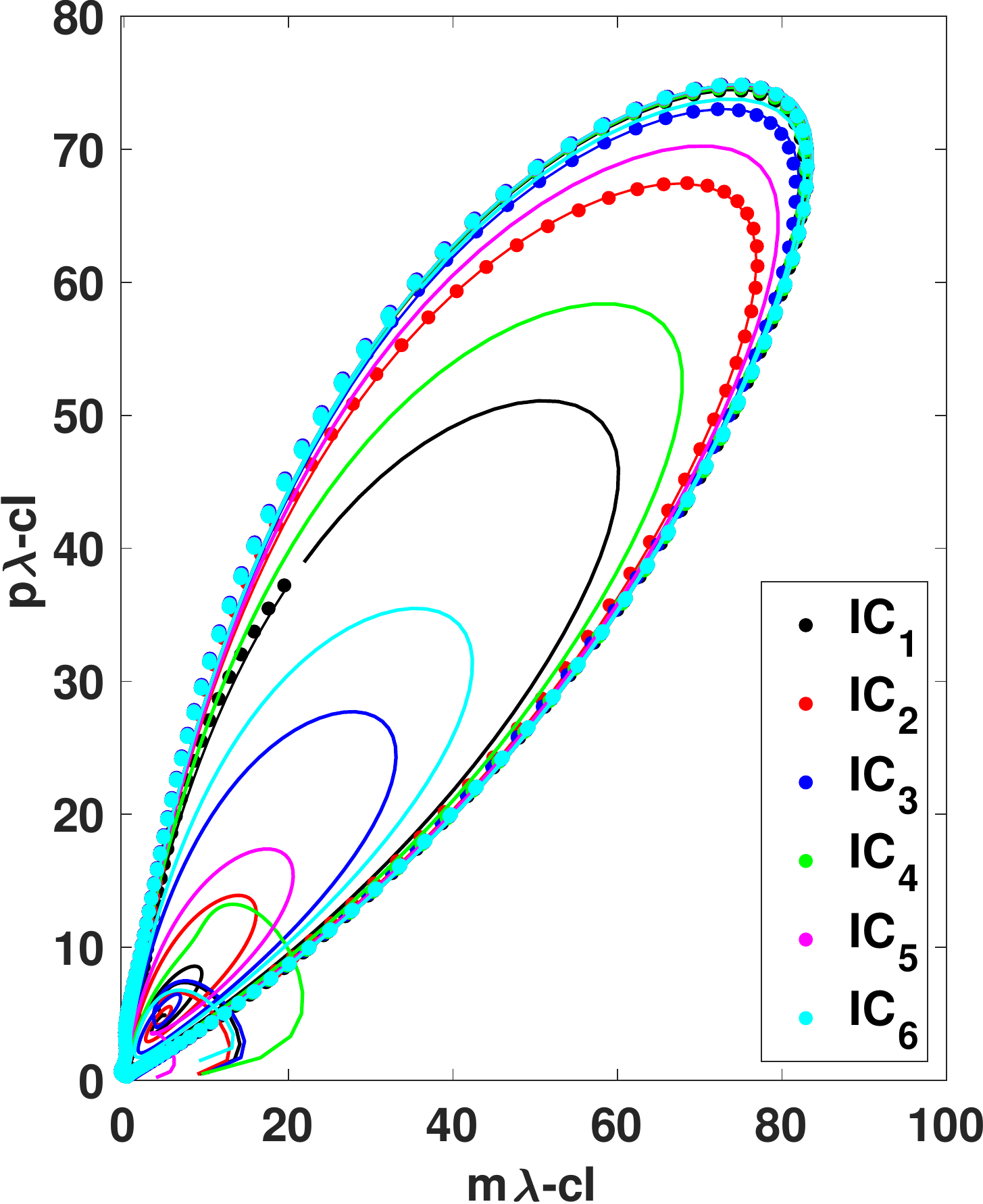}
    \caption{} \label{subfig_2b} \end{subfigure}
    \begin{subfigure}{0.49\columnwidth} \centering
    \includegraphics[width=.95\columnwidth]{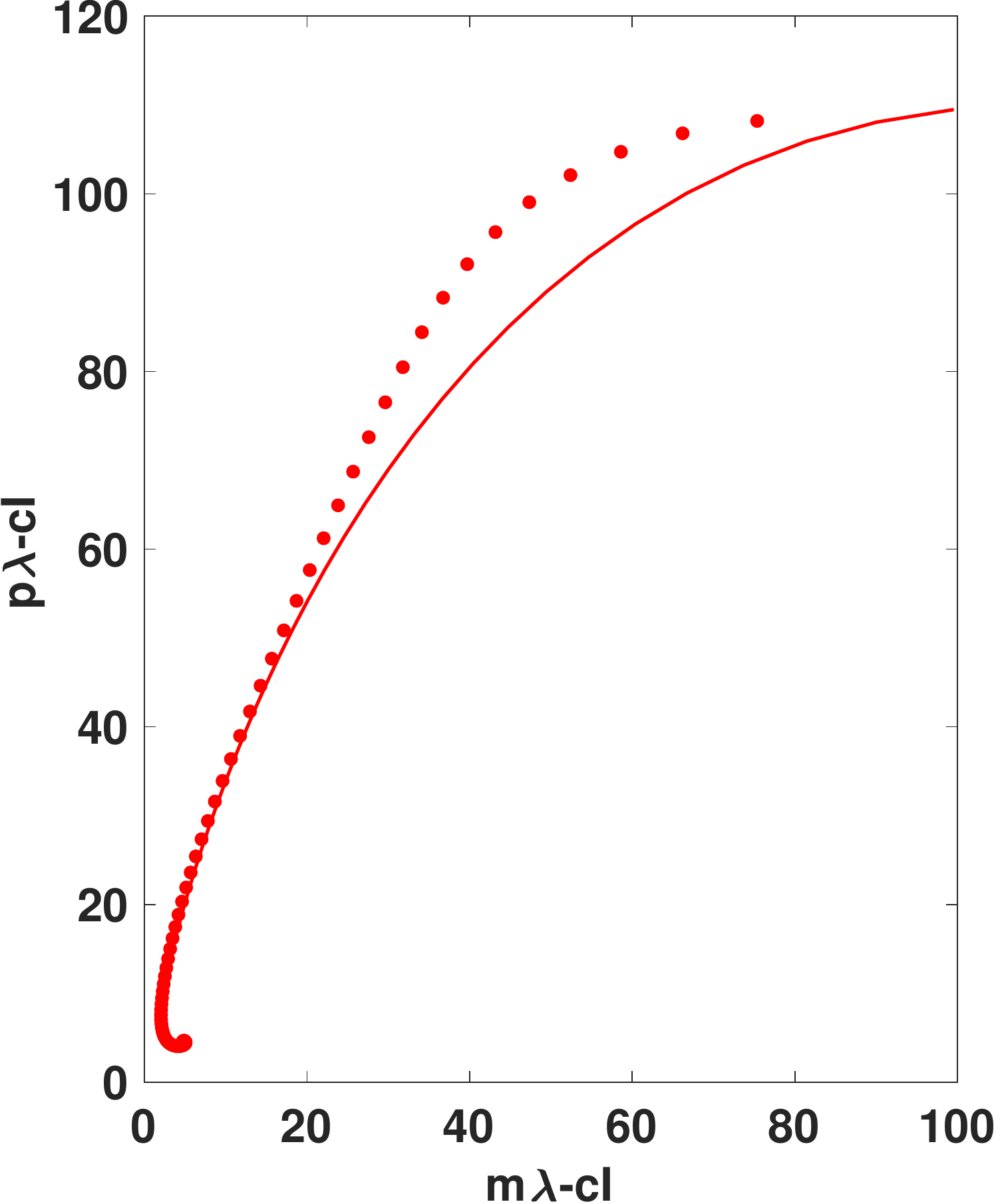}
    \caption{} \label{subfig_2c} \end{subfigure}
    \begin{subfigure}{0.49\columnwidth} \centering
    \includegraphics[width=.95\columnwidth]{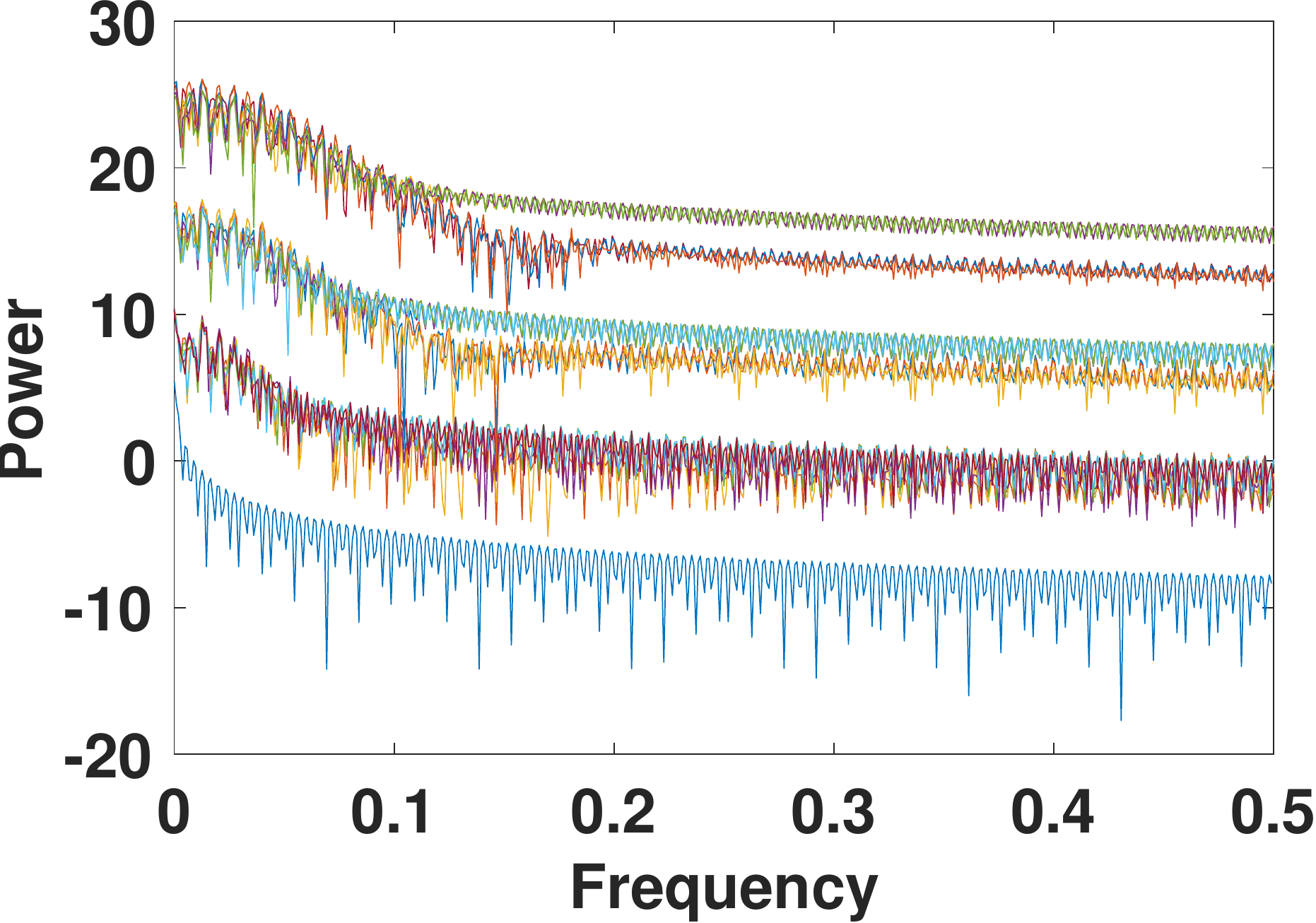}
    \caption{} \label{subfig_2e} \end{subfigure}
    \begin{subfigure}{0.49\columnwidth} \centering
    \includegraphics[width=.95\columnwidth]{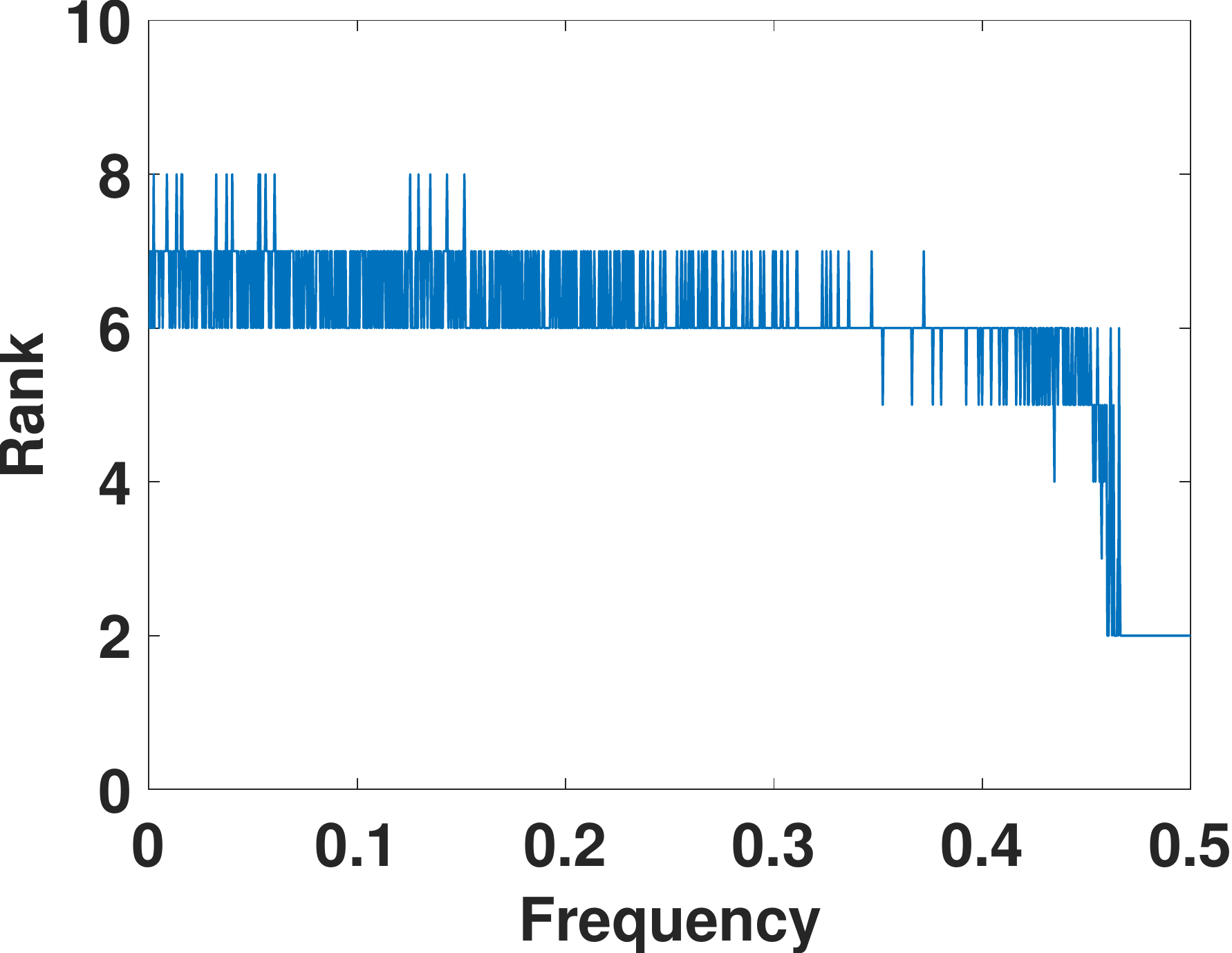}
    \caption{} \label{subfig_2f} \end{subfigure}
    \caption{The repressilator trained from outside a unit ball centered at $0$ and predicted from another basin of attraction using the Koopman operator.  (a) States oscillating with time as simulated (solid lines). Koopman operator trained up to $t = 25$ for prediction (dotted lines) then onward. (b) Simulated (solid) trajectories growing into limit cycles and prediction (dotted) of the limit cycles. (c) Simulated (solid) and predicted (dotted) trajectory form another basin of attraction. (d) Periodogram of trajectories. (e) Rank of Power Spectrum of trajectories up-to $t = 25$ used to train the Koopman Operator}
    \label{fig:RepressilatorPlot_Outside}
\end{figure}
There are many models of the repressilator, with varying degrees of complexity and intricacy to capture the underlying biophysical dynamics. We 
consider a simplified three dimensional model from the first experimental implementation of the repressilator \cite{ElowitzNature}, that captures the limit cycle and basin of attraction, to study the role of the initial condition in PE of the nonlinear system. 
Consider the model:
\begin{equation}
\begin{aligned}
    \dot{m}_{(i)} &= -m_{(i)} + \frac{\alpha}{1 + p_{(j)}^n} + \alpha_0 \\ 
    \dot{p}_{(i)} & = -\beta (p_{(i)} - m_{(i)})\\
    (i,j) &= \{([lacI],[cI]),([tetR],[lacI]),([cI],[tetR])\}\\
    n &= 2, \ \alpha_0 = 0, \ \alpha = 100, \ \beta = 1
    \end{aligned}
\end{equation}


An example set of commonly used initial concentrations is $1,0,0,0,0,0$ nM for LacI, $\lambda$-cI, TetR, mLacI, m$\lambda$-cI, and mTetR. We model the degradation and dilution rate of all proteins as a lump term with average kinetic rate $\delta = 0.5$. Fig. \ref{subfig_1a} (and \ref{subfig_2a}) shows simulations (solid lines) of the repressilator from different initial conditions.  The repressilator exhibits a strongly attracting limit cycle and a single unstable equilibrium point at the origin.  Several initial conditions in the phase space mapped through the observable function have low gain, specifically those within $B_1(0)$ (unit ball in $\mathbb{R}^6$). These observable functions have low gain power spectra with spectral lines on the magnitude of numerical noise.  We noted that these initial conditions, when mapped through higher order polynomials, lead to overfitting due to the vanishing of the signal in higher-order terms.   To illustrate, $\bm K$ has been obtained using eDMD with third-order Hermite polynomials. 

We considered training the repressilator model with initial conditions drawn from two different regions of the phase space.  First, initial conditions within the unit disc centered at the unstable equilibrium point (solid lines in Fig. \ref{subfig_1b}) and secondly, initial conditions outside the unit disc centered at the same point (solid lines in Fig. \ref{subfig_2b}).  We simulated using 6 initial conditions from one basin of attraction and evaluated test predictions using initial conditions from another basin. For example, we would train within the unit disc (Fig. \ref{subfig_1b}) or outside it (Fig. \ref{subfig_2b}) and evaluate  prediction accuracy of the Koopman operator for trajectories initiated outside that basin of attraction (Fig. \ref{subfig_1c} and \ref{subfig_2c} respectively)  using a norm based error between the predicted and simulated trajectories.  

Notice the rank is greater for the power spectrum in Figure 1 than in Figure 2, which correlates with the failure to predict long-term global behavior in Figure 2.  Interestingly, the rank of the power spectrum was not as high as the dimension of the list of dictionaries, indicating that the true Koopman observable space is of a lower dimension than the dimension of dictionary functions.   Future work will investigate the iterative processes for identifying the minimal set of Koopman observables, their relationship to HankelDMD \cite{HankelDMD}, as well as the formal design of automated biological experiments to ensure {\it global} predictive accuracy of discovered Koopman models.

\section{ACKNOWLEDGMENTS}
The authors would also like to thank Igor Mezic, Robert Egbert, Bassam Bamieh, Sai Pushpak, Sean Warnick, and Umesh Vaidya for stimulating conversations. Any opinions, findings and conclusions or recommendations expressed in this material are those of the author(s) and do not necessarily reflect the views of the Defense Advanced Research Projects Agency (DARPA), the Department of Defense, or the United States Government. This work was supported partially by a Defense Advanced Research Projects Agency (DARPA) Grant No. DEAC0576RL01830 and an Institute of Collaborative Biotechnologies Grant.

\bibliographystyle{ieeetr}
\bibliography{main}
\end{document}